\title{The golden ratio, Fibonacci numbers and BBP-type formulas\thanks{%
MSC 2010: 11B39, 11Y60}}
\author[]{Kunle Adegoke\thanks{adegoke00@gmail.com\\Keywords: golden ratio, Fibonacci numbers, Lucas numbers, BBP-type formulas, arctangent }}
\affil{Department of Physics and Engineering Physics, \mbox{Obafemi Awolowo University, Ile-Ife, 220005 Nigeria}}
\theoremstyle{plain}
\numberwithin{equation}{section}
\newtheorem{thm}{THEOREM}[section]
\newtheorem{rem}[thm]{Remark}
\begin{document}
\date{}
\maketitle
\begin{abstract}
\noindent We derive interesting arctangent identities involving the golden ratio, \mbox{Fibonacci} numbers and \mbox{Lucas} numbers. Binary BBP-type formulas for the arctangents of certain odd powers of the golden ratio are also derived, for the first time in the literature. Finally we derive golden-ratio-base BBP-type formulas for some mathematical constants, including $\pi$, $\log 2$, $\log\phi$ and $\sqrt 2\,\arctan\sqrt 2$. The \mbox{$\phi-$nary} BBP-type formulas derived here are considerably simpler than similar results contained in earlier literature. 
\end{abstract}
\tableofcontents

\section{Introduction}
This paper is concerned with the derivation of interesting arctangent identities connecting the golden ratio, Fibonacci numbers and the related Lucas numbers. Binary BBP-type formulas for arctangents of the odd powers of the golden ratio will be derived, as well as golden-ratio-base BBP-type formulas for some mathematical constants. We will also present a couple of base~$5$ BBP-type formulas for linear combinations of the arctangents of even powers of the golden ratio. The golden ratio, having the numerical value of $(\sqrt 5+1)/2$ is denoted throughout this paper by $\phi$. The Fibonacci numbers are defined, as usual, through the recurrence relation $F_n=F_{n-1}+F_{n-2}$, with $F_0=0$ and $F_1=1$. The Lucas numbers are defined by $L_n=F_{n-1}+F_{n+1}$.

We shall often make use of the following algebraic properties of $\phi$

\begin{subequations}
\begin{eqnarray}\label{equ.xj19lbd}
\phi^2 &=& 1+\phi\,,\label{equ.zuch1o9}\\
\sqrt 5 &=& 2\phi-1\,,\label{equ.ln4di53}\\
\phi-1 &=& 1/\phi\,,\label{equ.epgagl8}\\
\phi^n &=& \phi F_n+F_{n-1}\,,\label{equ.n1xmwpl}\\
\phi^{-n} &=& (-1)^n(-\phi F_n+F_{n+1})\,,\label{equ.wr334af}\\
\mbox{and}\nonumber\\
\phi^n &=& \phi^{n-1}+\phi^{n-2}\label{equ.gxbvppb}\,.
\end{eqnarray}
\end{subequations}



We will also need the following trigonometric identities

\begin{subequations}
\begin{eqnarray}
\tan^{-1} x+\tan^{-1} y &=& \tan^{-1}\left (\frac{x+y}{1-xy}\right )\,,\quad xy<1\label{equ.ohqvcqn}\\
\tan^{-1} x-\tan^{-1} y &=& \tan^{-1}\left (\frac{x-y}{1+xy}\right )\,,\quad xy>-1\,\label{equ.gdoxus5}.
\end{eqnarray}
\end{subequations}

Results for arctangent identities involving the Fibonacci numbers and related sequences can also be found in earlier references~\cite{hoggatt5, shannon, mahon} and references therein, while results for \mbox{$\phi-$nary} BBP type formulas can be found in references~\cite{bailey01, borwein, chan, zhang}.

\section{Arctangent formulas for the odd powers of the golden ratio}

In this section we will present results for the arctangents of the odd powers of the golden ratio in terms of the arctangents of reciprocal Fibonacci and reciprocal Lucas numbers, as well as in terms of the arctangents of consecutive Fibonacci numbers.

\subsection{Arctangent formulas involving reciprocal Fibonacci and reciprocal Lucas numbers}

\begin{thm}\label{thmma.ahffbvn}

For positive integers, $k$,
\begin{equation}\label{equ.svdrzxs}
\tan^{-1}\phi^{2k-1}=2\tan^{-1}1-\frac{1}{2}\tan^{-1}\left(\frac{2}{L_{2k-1}}\right)\,.
\end{equation}

\end{thm}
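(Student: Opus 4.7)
\medskip

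\textbf{Proof proposal.} My plan is to reduce the claimed identity to a simple algebraic relation between $L_{2k-1}$ and $\phi^{2k-1}$ that is an immediate consequence of the Binet-type formulas \eqref{equ.n1xmwpl} and \eqref{equ.wr334af}.

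First I would observe that $2\tan^{-1} 1 = \pi/2$, so \eqref{equ.svdrzxs} is equivalent to
\begin{equation*}
\frac{\pi}{2} - \tan^{-1}\phi^{2k-1} \;=\; \frac{1}{2}\tan^{-1}\!\left(\frac{2}{L_{2k-1}}\right).
\end{equation*}
Since $\phi^{2k-1} > 0$, the left-hand side equals $\tan^{-1}(\phi^{-(2k-1)})$. Thus the theorem is equivalent to
\begin{equation*}
2\tan^{-1}(\phi^{-(2k-1)}) \;=\; \tan^{-1}\!\left(\frac{2}{L_{2k-1}}\right).
\end{equation*}

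Next I would apply identity \eqref{equ.ohqvcqn} with $x=y=\phi^{-(2k-1)}$. For every $k\ge 1$ we have $\phi^{-(2k-1)} < 1$, so $xy = \phi^{-2(2k-1)} < 1$ and the addition formula is valid. This gives
\begin{equation*}
2\tan^{-1}(\phi^{-(2k-1)}) \;=\; \tan^{-1}\!\left(\frac{2\phi^{-(2k-1)}}{1-\phi^{-2(2k-1)}}\right) \;=\; \tan^{-1}\!\left(\frac{2\phi^{2k-1}}{\phi^{2(2k-1)}-1}\right).
\end{equation*}
So everything reduces to verifying the algebraic identity
\begin{equation*}
\frac{\phi^{2(2k-1)}-1}{\phi^{2k-1}} \;=\; L_{2k-1}, \qquad\text{i.e.,}\qquad \phi^{2k-1} - \phi^{-(2k-1)} \;=\; L_{2k-1}.
\end{equation*}

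The remaining step is routine using the identities already listed. Setting $n=2k-1$ (odd), formula \eqref{equ.n1xmwpl} gives $\phi^n = \phi F_n + F_{n-1}$, while \eqref{equ.wr334af} gives $\phi^{-n} = -(-\phi F_n + F_{n+1}) = \phi F_n - F_{n+1}$ since $(-1)^n = -1$. Subtracting, $\phi^n - \phi^{-n} = F_{n-1}+F_{n+1} = L_n$ by the definition of the Lucas numbers, which is precisely the identity required.

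I do not expect a genuinely hard step here; the only point that needs a little care is checking the domain hypothesis $xy<1$ in \eqref{equ.ohqvcqn} so that the arctangent doubling produces no spurious multiple of $\pi$, and this is automatic because $\phi>1$ implies $\phi^{-(2k-1)}<1$ for all $k\ge 1$.
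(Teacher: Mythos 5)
Your proposal is correct and follows essentially the same route as the paper: the author likewise doubles $\tan^{-1}(1/\phi^{2k-1})$ via \eqref{equ.ohqvcqn} with $x=y=1/\phi^{2k-1}$, simplifies $\phi^{2k-1}-\phi^{-(2k-1)}$ to $F_{2k-2}+F_{2k}=L_{2k-1}$ using the same Binet-type relations, and concludes by the complementary-angle step you make explicit with $2\tan^{-1}1=\pi/2$. Your only additions --- verifying the hypothesis $xy<1$ and spelling out the final step the paper dismisses with ``the result follows'' --- are welcome bits of rigor, not a different argument.
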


\begin{proof}
Choosing $x=1/\phi^{2k-1}=y$ in~\eqref{equ.ohqvcqn}, we find
\[
\begin{split}
& 2\tan^{-1}\frac{1}{\phi^{2k-1}}\\
& =\tan^{-1}\left(\frac{2}{\phi^{2k-1}-\phi^{-(2k-1)}}\right)\\
& =\tan^{-1}\left(\frac{2}{F_{2k-2}+F_{2k}}\right)\\
& =\tan^{-1}\left(\frac{2}{L_{2k-1}}\right)\,,
\end{split}
\]

and the result follows.

\end{proof}

\begin{thm}

For non-zero integers, $k$,
\begin{subequations}
\begin{eqnarray}
&& \tan^{-1}\phi^{2k+1}=2\tan^{-1}1+\frac{1}{2}\tan^{-1}\left(\frac{1}{L_{2k}}\right)-\frac{1}{2}\tan^{-1}\left(\frac{1}{F_{2k}}\right)\label{equ.fsdxekk}\\
\nonumber\\
&& \tan^{-1}\phi^{2k-1}=2\tan^{-1}1-\frac{1}{2}\tan^{-1}\left(\frac{1}{L_{2k}}\right)-\frac{1}{2}\tan^{-1}\left(\frac{1}{F_{2k}}\right)\label{equ.w7urgvy}
\end{eqnarray}
\end{subequations}

\end{thm}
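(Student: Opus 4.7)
The plan is to reduce both identities to simpler two-term arctangent relations by invoking Theorem~\ref{thmma.ahffbvn} (possibly with a shifted index), and then to establish those relations via the addition/subtraction formulas \eqref{equ.ohqvcqn}--\eqref{equ.gdoxus5} together with a standard Fibonacci/Lucas product-to-sum identity.

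\smallskip

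\textbf{Step 1 (Reduction to two-term identities).} Subtracting \eqref{equ.w7urgvy} from the already proved
\[
\tan^{-1}\phi^{2k-1}=2\tan^{-1}1-\frac{1}{2}\tan^{-1}\left(\frac{2}{L_{2k-1}}\right),
\]
identity \eqref{equ.w7urgvy} is seen to be equivalent to
\[
\tan^{-1}\!\frac{1}{L_{2k}}+\tan^{-1}\!\frac{1}{F_{2k}}=\tan^{-1}\!\frac{2}{L_{2k-1}}. \qquad (\ast)
\]
Similarly, applying Theorem~\ref{thmma.ahffbvn} with $k$ replaced by $k+1$, identity \eqref{equ.fsdxekk} reduces to
\[
\tan^{-1}\!\frac{1}{F_{2k}}-\tan^{-1}\!\frac{1}{L_{2k}}=\tan^{-1}\!\frac{2}{L_{2k+1}}. \qquad (\ast\ast)
\]

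\smallskip

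\textbf{Step 2 (Numerators via Lucas--Fibonacci sums).} For $(\ast)$, apply \eqref{equ.ohqvcqn} to the left side. The numerator is $L_{2k}+F_{2k}$; using $L_n=F_{n-1}+F_{n+1}$ together with the Fibonacci recurrence, this collapses to $2F_{2k+1}$. For $(\ast\ast)$, using \eqref{equ.gdoxus5}, the numerator $L_{2k}-F_{2k}$ collapses to $2F_{2k-1}$ in the same fashion.

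\smallskip

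\textbf{Step 3 (Denominators and the key factorization).} The denominator in $(\ast)$ is $L_{2k}F_{2k}-1=F_{4k}-1$ (using the standard identity $F_nL_n=F_{2n}$), and in $(\ast\ast)$ it is $F_{4k}+1$. The crucial fact is the factorization
\[
F_{4k}-1=F_{2k+1}L_{2k-1}, \qquad F_{4k}+1=F_{2k-1}L_{2k+1},
\]
which follows from the product-to-sum identity $L_{m}F_{n}=F_{m+n}-(-1)^{n}F_{m-n}$ (easily established from Binet's formulas), applied with $(m,n)=(2k-1,\,2k+1)$ and $(m,n)=(2k+1,\,2k-1)$, after using $F_{\pm 2}=\pm 1$. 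Substituting yields the right-hand sides $2/L_{2k-1}$ and $2/L_{2k+1}$ respectively, proving $(\ast)$ and $(\ast\ast)$.

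\smallskip

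\textbf{Main obstacle and remarks.} The only nonroutine step is locating and proving the factorization $F_{4k}\pm 1=F_{2k\mp 1}L_{2k\pm 1}$; once this is in hand, the rest is bookkeeping with the addition/subtraction formulas. I would also check the hypotheses $xy<1$ in \eqref{equ.ohqvcqn} and $xy>-1$ in \eqref{equ.gdoxus5}: for positive $k\ge 1$ one has $L_{2k}F_{2k}\ge L_2F_2=3>1$, so the formulas apply with no $\pi$-correction; for negative $k$ the sign conventions $F_{-n}=(-1)^{n+1}F_n$ and $L_{-n}=(-1)^n L_n$ can be inserted to verify the statement case by case.
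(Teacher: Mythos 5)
Your proof is correct where the theorem itself is correct (positive $k$), and it takes a genuinely different route from the paper's. The paper never leaves the golden ratio: it applies \eqref{equ.ohqvcqn} with $x=1/\phi^{2k-1}$, $y=1/\phi^{2k+1}$ to obtain $\tan^{-1}\phi^{2k+1}+\tan^{-1}\phi^{2k-1}=\pi-\tan^{-1}\left(1/F_{2k}\right)$, applies \eqref{equ.gdoxus5} with $x=\phi^{2k+1}$, $y=\phi^{2k-1}$ to obtain $\tan^{-1}\phi^{2k+1}-\tan^{-1}\phi^{2k-1}=\tan^{-1}\left(1/L_{2k}\right)$, and then adds and subtracts these two relations; the only Fibonacci input is the $\phi$-algebra \eqref{equ.n1xmwpl}--\eqref{equ.wr334af}. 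You instead anchor everything on Theorem~\ref{thmma.ahffbvn} (at indices $k$ and $k+1$) and reduce the theorem to the two pure Fibonacci--Lucas identities you call $(\ast)$ and $(\ast\ast)$ --- which are precisely \eqref{equ.myyri84} and \eqref{equ.kfqaolk}, stated later in the paper \emph{without} proof --- and you prove those via $F_nL_n=F_{2n}$ and the product-to-sum identity $L_mF_n=F_{m+n}-(-1)^nF_{m-n}$. I checked your factorizations $F_{4k}-1=F_{2k+1}L_{2k-1}$ and $F_{4k}+1=F_{2k-1}L_{2k+1}$ and the collapses $F_{2k}+L_{2k}=2F_{2k+1}$, $L_{2k}-F_{2k}=2F_{2k-1}$; all are correct. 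In effect you run the paper's logic backwards: the paper obtains \eqref{equ.myyri84} and \eqref{equ.kfqaolk} by comparing Theorem~\ref{thmma.ahffbvn} with the present theorem, whereas you prove them independently and deduce the present theorem. The paper's route is shorter and self-contained in $\phi$-arithmetic; yours needs one extra lemma (the product-to-sum identity) but delivers part of the Section~4 theorem as a byproduct, which the paper leaves unproved.

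One caveat on your final remark. Your attention to the conditions $xy<1$ and $xy>-1$ is actually sharper than the paper's own proof, which silently violates $xy<1$ for $k\le -1$: there $xy=\phi^{-4k}>1$, so \eqref{equ.ehmv8kg} acquires an extra $\pi$, and the theorem \emph{as stated} fails for negative $k$ --- at $k=-1$, the right side of \eqref{equ.w7urgvy} is $\approx 1.80$ while $\tan^{-1}\phi^{-3}\approx 0.23$, an offset of exactly $2\tan^{-1}1$ (for $k\le -1$ the identities hold only after deleting the $2\tan^{-1}1$ term). So your suggestion that the negative-$k$ case ``can be verified case by case'' is over-optimistic; the honest conclusion is that both your argument and the paper's establish the result for $k\ge 1$ only, and the paper's hypothesis ``non-zero integers'' is too generous. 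This is a defect of the paper, not of your proof.
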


\begin{proof}
Choosing $x=1/\phi^{2k-1}$ and $y=1/\phi^{2k+1}$ in~\eqref{equ.ohqvcqn} and using the algebraic properties of $\phi$, it is straightforward to establish that
\[
\begin{split}
&\tan ^{ - 1} \left( {\frac{1}{{\phi ^{2k - 1} }}} \right) + \tan ^{ - 1} \left( {\frac{1}{{\phi ^{2k + 1} }}} \right)\\
& = \tan ^{ - 1} \left( {\frac{{\phi  + \phi^{-1}}}{{\phi ^{2k}  - \phi ^{ - 2k} }}} \right)\\
& = \tan ^{ - 1} \frac{{\sqrt 5 }}{{\sqrt 5 F_{2k} }} = \tan ^{ - 1} \left( {\frac{1}{{F_{2k} }}} \right)\,.
\end{split}
\]

That is
\begin{equation}\label{equ.ehmv8kg}
\tan^{-1}\phi^{2k+1}+\tan^{-1}\phi^{2k-1}=\pi-\tan ^{ - 1} \left( {\frac{1}{{F_{2k} }}} \right)\,.
\end{equation}

Choosing $x=\phi^{2k+1}$ and $y=\phi^{2k-1}$ in~\eqref{equ.gdoxus5}, we find

\begin{equation}\label{equ.fgingzp}
\begin{split}
\tan ^{ - 1} \phi ^{2k + 1}  - \tan ^{ - 1} \phi ^{2k - 1}&= \tan ^{ - 1} \left( {\frac{{\phi  - \phi ^{ - 1} }}{{\phi ^{ - 2k}  + \phi ^{2k} }}} \right)\\
&= \tan ^{ - 1} \left( {\frac{1}{{L_{2k} }}} \right)
\end{split}
\end{equation}

Addition of Eqs.~\eqref{equ.ehmv8kg} and \eqref{equ.fgingzp} gives~\eqref{equ.fsdxekk}, while subtraction of~\eqref{equ.fgingzp} from~\eqref{equ.ehmv8kg} gives~\eqref{equ.w7urgvy}. 
\end{proof}

\begin{rem}
Note that by performing the telescoping summation suggested by~\eqref{equ.fgingzp}, it is established that
\[
\tan^{-1} \phi^{2n+1} =\tan^{-1}\phi+\sum_{k = 1}^{n}\tan^{-1}\left(\frac{1}{L_{2k}}\right)\,,
\]
\end{rem}

which can be written as
\begin{equation}
\tan^{-1} \phi^{2n+1} = \tan^{-1}1+\frac{1}{2}\tan^{-1}\frac{1}{2}+\sum_{k = 1}^{n}\tan^{-1}\left(\frac{1}{L_{2k}}\right)\,,\label{equ.tjjg38v}
\end{equation}
since

\begin{equation}\label{equ.nhfkxe6}
\tan^{-1}\phi=\tan^{-1}1+\frac{1}{2}\tan^{-1}\frac{1}{2}\quad (k=1 \text{ in Theorem~\ref{thmma.ahffbvn}})\,.
\end{equation}

\subsection{Arctangent formulas involving the ratio of consecutive \mbox{Fibonacci} numbers}

\begin{thm}

For non-negative integers, $n$,
\begin{subequations}
\begin{eqnarray}
&& \tan^{-1} \phi^{4n-1} = 3\tan^{-1}1-\frac{1}{2}\tan^{-1}\frac{1}{2}-\tan^{-1}\left(\frac{F_{2n-1}}{F_{2n}}\right)\label{equ.ric8gsn},\\
\nonumber\\
&& \tan^{-1} \phi^{4n-3} = \tan^{-1}1+\frac{1}{2}\tan^{-1}\frac{1}{2}+\tan^{-1}\left(\frac{F_{2n-2}}{F_{2n-1}}\right)\,.\label{equ.g6b2dzg}
\end{eqnarray}
\end{subequations}

\begin{proof}
Choosing $x=1/\phi$ and $y=F_{p-1}/F_p$ in the trigonometric identity~\eqref{equ.gdoxus5}, clearing fractions and using properties~\eqref{equ.n1xmwpl} and \eqref{equ.wr334af} to simplify the numerator and denominator of the arctangent argument, and replacing $\tan^{-1}\phi$ with the right hand side of~\eqref{equ.nhfkxe6}, we obtain

\begin{equation}\label{equ.q77es1t}
(-1)^p\tan^{-1}(\phi^{2p-1})=\left(2(-1)^p+1\right)\tan^{-1}1-\frac{1}{2}\tan^{-1}\frac{1}{2}-\tan^{-1}\frac{F_{p-1}}{F_p}\,.
\end{equation}

Setting $p=2n$ in~\eqref{equ.q77es1t}, we obtain identity \eqref{equ.ric8gsn}, while $p=2n-1$ in~\eqref{equ.q77es1t} gives identity \eqref{equ.g6b2dzg}.
\end{proof}

\end{thm}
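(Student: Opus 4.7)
The plan is to reduce both identities to a single master formula of the shape~\eqref{equ.q77es1t}, from which \eqref{equ.ric8gsn} and \eqref{equ.g6b2dzg} drop out by choosing the parity of an auxiliary index $p$. The trigonometric backbone is the subtraction identity~\eqref{equ.gdoxus5} applied with the values $x=1/\phi$ and $y=F_{p-1}/F_p$; since $xy>0$, the hypothesis $xy>-1$ is trivially satisfied, and moreover $xy=F_{p-1}/(\phi F_p)<1/\phi<1$ for every $p\ge 1$, so the resulting arctangent lies in the principal range without any $\pi$ correction.

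\textbf{Key algebraic step.} Substituting these choices into $\tan^{-1}x-\tan^{-1}y=\tan^{-1}\!\bigl((x-y)/(1+xy)\bigr)$ and clearing fractions produces the argument $(F_p-\phi F_{p-1})/(\phi F_p+F_{p-1})$. Property~\eqref{equ.n1xmwpl} identifies the denominator as $\phi^p$, and property~\eqref{equ.wr334af}, applied to $\phi^{-(p-1)}$, identifies the numerator as $(-1)^{p-1}\phi^{1-p}$. The argument therefore collapses to $(-1)^{p-1}\phi^{-(2p-1)}$, after which I would pull the sign outside the (odd) arctangent function since $\phi^{-(2p-1)}>0$.

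\textbf{Assembly.} Using $\tan^{-1}(z)+\tan^{-1}(1/z)=\pi/2=2\tan^{-1}1$ for $z>0$, I would rewrite $\tan^{-1}\phi^{-(2p-1)}$ in terms of $\tan^{-1}\phi^{2p-1}$ and likewise $\tan^{-1}(1/\phi)$ in terms of $\tan^{-1}\phi$, and then invoke the special case~\eqref{equ.nhfkxe6} to eliminate $\tan^{-1}\phi$ in favour of $\tan^{-1}1$ and $\tfrac12\tan^{-1}\tfrac12$. After rearranging, this produces exactly the master identity~\eqref{equ.q77es1t}. Identity~\eqref{equ.ric8gsn} then follows by setting $p=2n$ (so $2p-1=4n-1$ and $p-1=2n-1$), and identity~\eqref{equ.g6b2dzg} follows by setting $p=2n-1$ (so $2p-1=4n-3$ and $p-1=2n-2$).

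\textbf{Main obstacle.} The principal source of potential error is the sign bookkeeping in the simplification of the numerator via~\eqref{equ.wr334af}: a mis-indexing of the exponent would flip the parity factor $(-1)^{p-1}$ and interchange the two target identities. Once that factor is pinned down correctly, the remainder of the argument is essentially linear algebra in the three constants $\tan^{-1}1$, $\tfrac12\tan^{-1}\tfrac12$, and $\tan^{-1}(F_{p-1}/F_p)$.
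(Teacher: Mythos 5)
Your proposal is correct and takes essentially the same route as the paper's proof: the same substitution $x=1/\phi$, $y=F_{p-1}/F_p$ in~\eqref{equ.gdoxus5}, the same simplification of numerator and denominator via~\eqref{equ.n1xmwpl} and~\eqref{equ.wr334af} to reach the master identity~\eqref{equ.q77es1t}, and the same parity specializations $p=2n$ and $p=2n-1$. You merely make explicit two steps the paper leaves implicit --- the complementary-angle conversion $\tan^{-1}z+\tan^{-1}(1/z)=2\tan^{-1}1$ for $z>0$ and the sign bookkeeping giving $(-1)^{p-1}\phi^{-(2p-1)}$ --- and both are carried out correctly.
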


\section{Arctangent formulas for the reciprocal even powers of the golden ratio}

\begin{thm}
For non-negative integers, $k$,

\begin{equation}\label{equ.qmp0021}
2\tan^{-1}\left(\frac{1}{\phi^{2k}}\right)=\tan^{-1}\left(\frac{2}{F_{2k}\sqrt 5}\right)\,.
\end{equation}

\end{thm}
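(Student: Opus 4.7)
The plan is to apply the tangent addition formula \eqref{equ.ohqvcqn} with $x=y=1/\phi^{2k}$, which immediately gives the double-angle form
\[
2\tan^{-1}\left(\frac{1}{\phi^{2k}}\right)=\tan^{-1}\left(\frac{2/\phi^{2k}}{1-1/\phi^{4k}}\right)
=\tan^{-1}\left(\frac{2}{\phi^{2k}-\phi^{-2k}}\right),
\]
after multiplying numerator and denominator by $\phi^{2k}$. So the entire task reduces to showing that the denominator $\phi^{2k}-\phi^{-2k}$ equals $F_{2k}\sqrt 5$.

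To verify this, I would plug in the algebraic identities \eqref{equ.n1xmwpl} and \eqref{equ.wr334af}, noting that since $2k$ is even we have $\phi^{2k}=\phi F_{2k}+F_{2k-1}$ and $\phi^{-2k}=-\phi F_{2k}+F_{2k+1}$. Subtracting gives
\[
\phi^{2k}-\phi^{-2k}=2\phi F_{2k}+F_{2k-1}-F_{2k+1}=2\phi F_{2k}-F_{2k}=F_{2k}(2\phi-1),
\]
using the Fibonacci recurrence $F_{2k+1}-F_{2k-1}=F_{2k}$. An appeal to \eqref{equ.ln4di53} then replaces $2\phi-1$ by $\sqrt 5$, yielding the required identity and completing the proof.

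The only subtlety I foresee is the validity condition $xy<1$ needed to apply \eqref{equ.ohqvcqn}: for $k\geq 1$ we have $1/\phi^{4k}<1$, so the formula applies directly. The case $k=0$ is a boundary case where $xy=1$ and the right-hand side of \eqref{equ.qmp0021} must be interpreted as $\tan^{-1}(+\infty)=\pi/2$, which agrees with the left-hand side $2\tan^{-1}(1)=\pi/2$; I would simply mention this as a limiting case rather than a separate obstacle. Beyond this, the argument is a brief routine calculation; no genuine difficulty arises.
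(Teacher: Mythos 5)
Your proof is correct and is essentially identical to the paper's: the paper's one-line proof simply says to choose $x=1/\phi^{2k}=y$ in \eqref{equ.ohqvcqn}, and your computation showing $\phi^{2k}-\phi^{-2k}=F_{2k}\sqrt 5$ via \eqref{equ.n1xmwpl}, \eqref{equ.wr334af} and \eqref{equ.ln4di53} just fills in the details the paper leaves implicit. Your handling of the boundary case $k=0$ (where $xy=1$ and $F_0=0$ force the interpretation $\tan^{-1}(+\infty)=\pi/2$) is careful and correct, and is a point the paper passes over silently.
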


\begin{proof}
Identity~\eqref{equ.qmp0021} follows from the choice of $x=1/\phi^{2k}=y$ in~\eqref{equ.ohqvcqn}.

\end{proof}

\begin{thm}
For non-negative integers, $k$,

\begin{subequations}
\begin{eqnarray}
&& 2\tan^{-1}\left(\frac{1}{\phi^{2k}}\right)=\tan^{-1}\left(\frac{\sqrt 5}{L_{2k+1}}\right)+\tan^{-1}\left(\frac{1}{F_{2k+1}\sqrt 5}\right)\label{equ.ydokq3d}\\
\nonumber\\
&& 2\tan^{-1}\left(\frac{1}{\phi^{2k+2}}\right)=\tan^{-1}\left(\frac{\sqrt 5}{L_{2k+1}}\right)-\tan^{-1}\left(\frac{1}{F_{2k+1}\sqrt 5}\right)\,.\label{equ.eggvqz7}
\end{eqnarray}
\end{subequations}

\end{thm}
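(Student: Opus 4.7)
The plan is to mimic exactly the strategy used earlier in the paper for \eqref{equ.fsdxekk}--\eqref{equ.w7urgvy}: compute the sum and difference of $\tan^{-1}(1/\phi^{2k})$ and $\tan^{-1}(1/\phi^{2k+2})$ separately using \eqref{equ.ohqvcqn} and \eqref{equ.gdoxus5}, then add and subtract the resulting two identities to isolate $2\tan^{-1}(1/\phi^{2k})$ and $2\tan^{-1}(1/\phi^{2k+2})$, which yields \eqref{equ.ydokq3d} and \eqref{equ.eggvqz7} respectively.

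For the sum, I would take $x=1/\phi^{2k}$ and $y=1/\phi^{2k+2}$ in \eqref{equ.ohqvcqn} and clear fractions to obtain
\[
\tan^{-1}\!\left(\frac{1}{\phi^{2k}}\right)+\tan^{-1}\!\left(\frac{1}{\phi^{2k+2}}\right)=\tan^{-1}\!\left(\frac{\phi^{2k}(\phi^2+1)}{\phi^{4k+2}-1}\right).
\]
The key simplifications are the algebraic identity $\phi^2+1=\phi\sqrt 5$ (which follows by writing $\phi^2+1=(3+\sqrt 5)/2+1=(5+\sqrt 5)/2=\phi\sqrt 5$) and the Binet-type identity $\phi^{2k+1}-\phi^{-(2k+1)}=L_{2k+1}$, giving $\phi^{4k+2}-1=\phi^{2k+1}L_{2k+1}$. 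After cancellation the sum reduces to $\tan^{-1}(\sqrt 5/L_{2k+1})$.

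For the difference, with the same choice of $x$ and $y$ in \eqref{equ.gdoxus5}, I would obtain
\[
\tan^{-1}\!\left(\frac{1}{\phi^{2k}}\right)-\tan^{-1}\!\left(\frac{1}{\phi^{2k+2}}\right)=\tan^{-1}\!\left(\frac{\phi^{2k}(\phi^2-1)}{\phi^{4k+2}+1}\right).
\]
Here the simplifications are $\phi^2-1=\phi$ (from \eqref{equ.zuch1o9}) and $\phi^{2k+1}+\phi^{-(2k+1)}=\sqrt 5\,F_{2k+1}$ (Binet's formula for odd index), so $\phi^{4k+2}+1=\phi^{2k+1}\sqrt 5\,F_{2k+1}$, and the difference collapses to $\tan^{-1}(1/(F_{2k+1}\sqrt 5))$.

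Adding the two resulting identities gives \eqref{equ.ydokq3d}; subtracting the second from the first gives \eqref{equ.eggvqz7}. There is no genuine obstacle—the only thing to be careful about is the verification $\phi^2+1=\phi\sqrt 5$, which is not listed among \eqref{equ.xj19lbd}--\eqref{equ.gxbvppb}; once that is dispatched, everything else is immediate from the properties of $\phi$ already on hand. The hypothesis $xy<1$ in \eqref{equ.ohqvcqn} holds since $1/\phi^{4k+2}<1$, and $xy>-1$ in \eqref{equ.gdoxus5} is trivially satisfied since both arguments are positive.
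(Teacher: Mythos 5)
Your proposal is correct and follows essentially the same route as the paper: the paper likewise applies \eqref{equ.ohqvcqn} and \eqref{equ.gdoxus5} with $x=1/\phi^{2k}$, $y=1/\phi^{2k+2}$ to obtain the sum and difference identities \eqref{equ.uzbbhnp} and \eqref{equ.h37ks3p}, then adds and subtracts them to get \eqref{equ.ydokq3d} and \eqref{equ.eggvqz7}. Your explicit verifications of $\phi^2+1=\phi\sqrt 5$, $\phi^{2k+1}-\phi^{-(2k+1)}=L_{2k+1}$ and $\phi^{2k+1}+\phi^{-(2k+1)}=\sqrt 5\,F_{2k+1}$ simply spell out the simplifications the paper leaves as ``straightforward.''
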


\begin{proof}
Choosing $x=1/\phi^{2k}$ and $y=1/\phi^{2k+2}$ in~\eqref{equ.ohqvcqn} and using the algebraic properties of $\phi$, it is straightforward to establish that

\begin{equation}\label{equ.uzbbhnp}
\tan^{-1}\left(\frac{1}{\phi^{2k}}\right)+\tan^{-1}\left(\frac{1}{\phi^{2k+2}}\right)=\tan^{-1}\left(\frac{\sqrt 5}{L_{2k+1}}\right)\,.
\end{equation}

Choosing $x=1/\phi^{2k}$ and $y=1/\phi^{2k+2}$ in~\eqref{equ.gdoxus5}, we find

\begin{equation}\label{equ.h37ks3p}
\tan^{-1}\left(\frac{1}{\phi^{2k}}\right)-\tan^{-1}\left(\frac{1}{\phi^{2k+2}}\right)=\tan^{-1}\left(\frac{1}{F_{2k+1}\sqrt 5}\right)\,.
\end{equation}

Addition of~\eqref{equ.uzbbhnp} and \eqref{equ.h37ks3p} gives~\eqref{equ.ydokq3d}, while subtraction of~\eqref{equ.h37ks3p} from~\eqref{equ.uzbbhnp} gives~\eqref{equ.eggvqz7}.

\end{proof}

\begin{rem}
Performing the telescoping summation invited by~\eqref{equ.h37ks3p}, we obtain

\begin{equation}
\tan^{-1}\left(\frac{1}{\phi^{2}}\right)-\tan^{-1}\left(\frac{1}{\phi^{2n+2}}\right)=\sum_{k=1}^n\tan^{-1}\left(\frac{1}{F_{2k+1}\sqrt 5}\right)\,.
\end{equation}

Taking limit $n\to\infty$, we obtain the formula

\begin{equation}
\tan^{-1}\left(\frac{1}{\phi^{2}}\right)=\sum_{k=1}^\infty\tan^{-1}\left(\frac{1}{F_{2k+1}\sqrt 5}\right)\,.
\end{equation}

\end{rem}

\section{Arctangent identities involving the Fibonacci and Lucas numbers}
\subsection{Arctangent formulas involving reciprocal \mbox{Fibonacci} and \mbox{reciprocal} \mbox{Lucas} numbers}
\begin{thm}
For non-zero integers, $n$,
\begin{subequations}\label{equ.ptld4p6}
\begin{eqnarray}
\tan^{-1}\left(\frac{1}{F_{2n-2}}\right) &=& \tan^{-1}\left(\frac{1}{F_{2n}}\right)+\tan^{-1}\left(\frac{1}{L_{2n-2}}\right)+\tan^{-1}\left(\frac{1}{L_{2n}}\right)\,,\label{equ.x2ffu2e}\\
\nonumber\\
\nonumber\\
\tan^{-1}\left(\frac{2}{L_{2n-1}}\right) &=& \tan^{-1}\left(\frac{1}{F_{2n}}\right)+\tan^{-1}\left(\frac{1}{L_{2n}}\right)\,,\label{equ.myyri84}\\
\nonumber\\
\nonumber\\
\tan^{-1}\left(\frac{2}{L_{2n+1}}\right) &=& \tan^{-1}\left(\frac{1}{F_{2n}}\right)-\tan^{-1}\left(\frac{1}{L_{2n}}\right)\,,\label{equ.kfqaolk}\\
\nonumber\\
\tan^{-1}\left(\frac{2}{F_{2n}\sqrt 5}\right) &=& \tan^{-1}\left(\frac{\sqrt 5}{L_{2n+1}}\right)+\tan^{-1}\left(\frac{1}{F_{2n+1}\sqrt 5}\right)\,.\label{equ.dnn6d12}
\end{eqnarray}
\end{subequations}

\end{thm}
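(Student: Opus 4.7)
The plan is that none of the four identities require a fresh trigonometric manipulation: each one drops out by equating two closed forms for the same arctangent that have already been established earlier in the paper.

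First, for \eqref{equ.myyri84}, I would observe that \eqref{equ.svdrzxs} and \eqref{equ.w7urgvy} both express $\tan^{-1}\phi^{2k-1}$ in closed form. Setting them equal, the common $2\tan^{-1}1$ cancels and the factor of $1/2$ divides out, leaving
\[
\tan^{-1}\!\left(\tfrac{2}{L_{2n-1}}\right)=\tan^{-1}\!\left(\tfrac{1}{F_{2n}}\right)+\tan^{-1}\!\left(\tfrac{1}{L_{2n}}\right)
\]
after writing $k=n$. For \eqref{equ.kfqaolk}, I would apply the same tactic with \eqref{equ.svdrzxs} at index $k=n+1$ and \eqref{equ.fsdxekk} at $k=n$; both sides are $\tan^{-1}\phi^{2n+1}$, so equating them and cancelling the constant yields the required difference formula. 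For \eqref{equ.dnn6d12}, I would equate the two expressions \eqref{equ.qmp0021} and \eqref{equ.ydokq3d} for $2\tan^{-1}(1/\phi^{2k})$ at $k=n$; the identity is immediate.

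Identity \eqref{equ.x2ffu2e} is the only one that needs a second step. I would rewrite \eqref{equ.kfqaolk} at index $n-1$ in the form
\[
\tan^{-1}\!\left(\tfrac{1}{F_{2n-2}}\right)=\tan^{-1}\!\left(\tfrac{2}{L_{2n-1}}\right)+\tan^{-1}\!\left(\tfrac{1}{L_{2n-2}}\right),
\]
and then substitute the already-established \eqref{equ.myyri84} for $\tan^{-1}(2/L_{2n-1})$ to obtain the three-term decomposition.

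There is no real obstacle beyond bookkeeping: one must keep the index shifts consistent (in particular, noting that \eqref{equ.myyri84} at $n$ and \eqref{equ.kfqaolk} at $n-1$ share the common summand $\tan^{-1}(2/L_{2n-1})$) and verify that all the arctangent arguments are small enough so that the domain conditions $xy<1$ or $xy>-1$ used implicitly in \eqref{equ.ohqvcqn} and \eqref{equ.gdoxus5} are respected. Since every argument that appears is a positive real of modulus at most $1$ (or is $\sqrt{5}/L_{2k+1}<1$ for $k\ge 1$), these conditions are trivial to check.
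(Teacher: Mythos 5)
Your strategy---obtaining all four identities by equating closed forms already established earlier, rather than by fresh trigonometric manipulation---is evidently the intended one: the paper states this theorem with no proof at all, and immediately afterwards derives \eqref{equ.szjec8z} by exactly such a combination of \eqref{equ.myyri84} and \eqref{equ.kfqaolk}. Your index bookkeeping for positive $n$ is correct. However, there is a genuine gap at negative $n$: the theorem asserts the identities for all \emph{non-zero} integers $n$, and your ingredients do not reach that far. Identity \eqref{equ.svdrzxs} is stated (and true) only for positive $k$---its proof takes $x=y=\phi^{-(2k-1)}$ in \eqref{equ.ohqvcqn}, and the condition $xy<1$ fails for $k\le 0$; at $k=0$ the formula is false outright. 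Hence your derivation of \eqref{equ.myyri84} (which uses \eqref{equ.svdrzxs} at $k=n$) and of \eqref{equ.kfqaolk} (which uses it at $k=n+1$) covers only $n\ge 1$; likewise \eqref{equ.qmp0021} and \eqref{equ.ydokq3d} are stated for non-negative $k$, so your argument for \eqref{equ.dnn6d12} covers only $n\ge 0$. Your closing remark that ``every argument that appears is a positive real of modulus at most $1$'' is false precisely in the missing range: since $L_{-m}=(-1)^m L_m$, odd-indexed Lucas numbers are negative for $n<0$, so e.g.\ $2/L_{2n-1}<0$ there.

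The patch is short and worth recording. Using $F_{-m}=(-1)^{m+1}F_m$ and $L_{-m}=(-1)^m L_m$, the substitution $n\mapsto -n$ interchanges \eqref{equ.myyri84} and \eqref{equ.kfqaolk}, sends \eqref{equ.x2ffu2e} to its own instance at a shifted positive index, and turns \eqref{equ.dnn6d12} into the identity $\tan^{-1}\bigl(\sqrt 5/L_{2m-1}\bigr)=\tan^{-1}\bigl(2/(F_{2m}\sqrt 5)\bigr)+\tan^{-1}\bigl(1/(F_{2m-1}\sqrt 5)\bigr)$, which follows by equating \eqref{equ.eggvqz7} at $k=m-1$ with \eqref{equ.qmp0021} at $k=m$; so the positive-$n$ cases you proved do imply the negative-$n$ cases, but this step must be said. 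Two smaller points. First, your final paragraph about checking $xy<1$ or $xy>-1$ is misdirected: your proof never applies the addition formulas, only linear combinations of earlier identities, so what actually needs checking is the validity range of each cited theorem---which is exactly where the gap above lives. Second, \eqref{equ.x2ffu2e} at $n=1$ involves $1/F_0=1/0$, and your chain would invoke \eqref{equ.kfqaolk} at the excluded index $n-1=0$; the identity there only makes sense with the convention $\tan^{-1}(1/0)=\pi/2$, a defect inherited from the theorem's statement rather than introduced by you, but one your write-up should flag.
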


\pagebreak



\begin{rem}
Using the following identity (Theorem 4 of \cite{hoggatt5}), 

\begin{equation}\label{equ.wdgo3gg}
\tan^{-1}\left(\frac{1}{F_{2n}}\right) = \tan^{-1}\left(\frac{1}{F_{2n+1}}\right)+\tan^{-1}\left(\frac{1}{F_{2n+2}}\right)\,,
\end{equation}

identities~\eqref{equ.x2ffu2e}~---~\eqref{equ.kfqaolk} can also be written
\begin{subequations}
\begin{eqnarray}
\tan^{-1}\left(\frac{1}{F_{2n-1}}\right) &=& \tan^{-1}\left(\frac{1}{L_{2n-2}}\right)+\tan^{-1}\left(\frac{1}{L_{2n}}\right)\,,\label{equ.kzwxgqw}\\
\nonumber\\
\tan^{-1}\left(\frac{2}{L_{2n-1}}\right) &=& \tan^{-1}\left(\frac{1}{L_{2n}}\right)+\tan^{-1}\left(\frac{1}{F_{2n+1}}\right)+\tan^{-1}\left(\frac{1}{F_{2n+2}}\right)\,,\nonumber\\
\nonumber\\
\tan^{-1}\left(\frac{2}{L_{2n+1}}\right) &=& \tan^{-1}\left(\frac{1}{F_{2n+1}}\right)+\tan^{-1}\left(\frac{1}{F_{2n+2}}\right)-\tan^{-1}\left(\frac{1}{L_{2n}}\right)\nonumber\,.
\end{eqnarray}
\end{subequations}

The identity~\eqref{equ.kzwxgqw} is Theorem~3 of~\cite{hoggatt5}.

\end{rem}


Subtracting equation~\eqref{equ.kfqaolk} from equation~\eqref{equ.myyri84} we obtain, for non-zero integers, the following arctangent identity involving three consecutive Lucas numbers:

\begin{thm}

\begin{equation}
\tan^{-1}\left(\frac{2}{L_{2n-1}}\right) = 2\tan^{-1}\left(\frac{1}{L_{2n}}\right)+\tan^{-1}\left(\frac{2}{L_{2n+1}}\right)\label{equ.szjec8z}\,.
\end{equation}

\end{thm}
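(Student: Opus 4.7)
The plan is to notice that the author has essentially handed us the proof in the sentence immediately preceding the theorem: we simply subtract the previously established identity~\eqref{equ.kfqaolk} from~\eqref{equ.myyri84}. Since both right-hand sides contain $\tan^{-1}(1/F_{2n})$ with the same sign, the Fibonacci term cancels, while the Lucas term $\tan^{-1}(1/L_{2n})$ appears with opposite signs and therefore combines into $2\tan^{-1}(1/L_{2n})$. The resulting equation reads
\[
\tan^{-1}\!\left(\frac{2}{L_{2n-1}}\right)-\tan^{-1}\!\left(\frac{2}{L_{2n+1}}\right)=2\tan^{-1}\!\left(\frac{1}{L_{2n}}\right),
\]
and a trivial transposition yields~\eqref{equ.szjec8z}.

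Because the argument proceeds by linearly combining two identities that have already been justified in the preceding theorem, no new application of the arctangent addition or subtraction formulas~\eqref{equ.ohqvcqn}, \eqref{equ.gdoxus5} is required. In particular, the domain conditions $xy<1$ and $xy>-1$ do not need to be re-examined; they were already verified when~\eqref{equ.myyri84} and~\eqref{equ.kfqaolk} were derived.

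There is essentially no obstacle to this proof. The one minor point worth mentioning is that the claim is stated for \emph{non-zero} integers $n$, which matches the range of validity of the two ingredient identities, so no additional case analysis (e.g.\ for $n=0$, where $L_{2n-1}=L_{-1}=-1$ would cause a sign issue) is needed. The proof should therefore consist of a single displayed subtraction followed by a one-line rearrangement.
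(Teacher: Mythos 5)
Your proof is correct and is exactly the paper's argument: the identity is obtained by subtracting~\eqref{equ.kfqaolk} from~\eqref{equ.myyri84}, cancelling the common term $\tan^{-1}(1/F_{2n})$, and transposing, precisely as the sentence preceding the theorem indicates. Your remarks about not needing to re-verify the domain conditions and about the stated range of $n$ are also accurate.
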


\begin{rem}

It is instructive to compare the two identities equation~\eqref{equ.wdgo3gg} involving \mbox{Fibonacci} numbers and equation~\eqref{equ.szjec8z} involving Lucas numbers.

\end{rem}

\subsection{Arctangent formulas involving the ratio of consecutive \mbox{Fibonacci} numbers}

\begin{thm}
For positive integers, $n$,
\begin{subequations}\label{equ.nc2rp6z}
\begin{eqnarray}
\tan^{-1}\left(\frac{F_{2n}}{F_{2n+1}}\right)&=&\sum_{k=1}^{2n}\tan^{-1}\left( \frac{1}{L_{2k}}\right)\label{equ.k1d613q}\,,\\
\nonumber\\
\tan^{-1}\left (\frac{F_{2n-1}}{F_{2n}}\right)&=&\tan^{-1}2-\sum_{k=1}^{2n-1}\tan^{-1}\left( \frac{1}{L_{2k}}\right)\,,\label{equ.tbk36la}\\
\tan^{-1}\left(\frac{F_{2n}}{F_{2n+1}}\right)&=&\tan^{-1}1-\frac{1}{2}\tan^{-1}\frac{1}{2}-\frac{1}{2}\tan^{-1}\left(\frac{2}{L_{4n+1}}\right)\label{equ.r1jgvwv}\\
\mbox{and}\nonumber\\
\tan^{-1}\left(\frac{F_{2n-1}}{F_{2n}}\right)&=&\tan^{-1}1-\frac{1}{2}\tan^{-1}\frac{1}{2}+\frac{1}{2}\tan^{-1}\left(\frac{2}{L_{4n-1}}\right)\,.\label{equ.e1vvx0a}
\end{eqnarray}
\end{subequations}

\end{thm}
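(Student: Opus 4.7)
The plan is to derive all four identities by combining three already-established ingredients: the formula rearranged from~\eqref{equ.ric8gsn}--\eqref{equ.g6b2dzg}, which expresses a Fibonacci-ratio arctangent in terms of $\tan^{-1}\phi^{2p-1}$; Theorem~\ref{thmma.ahffbvn}, which converts each $\tan^{-1}\phi^{2k-1}$ into a reciprocal-Lucas arctangent; and the telescoping identity~\eqref{equ.tjjg38v}, which converts the same quantity into a sum of reciprocal-Lucas arctangents.

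First I would rewrite~\eqref{equ.q77es1t} in the two useful forms
\[
\tan^{-1}\!\frac{F_{2n-1}}{F_{2n}} \;=\; 3\tan^{-1}1 \;-\; \tfrac{1}{2}\tan^{-1}\tfrac{1}{2} \;-\; \tan^{-1}\phi^{4n-1},
\]
\[
\tan^{-1}\!\frac{F_{2n}}{F_{2n+1}} \;=\; \tan^{-1}\phi^{4n+1} \;-\; \tan^{-1}1 \;-\; \tfrac{1}{2}\tan^{-1}\tfrac{1}{2},
\]
obtained by solving~\eqref{equ.ric8gsn} for $\tan^{-1}(F_{2n-1}/F_{2n})$, and~\eqref{equ.g6b2dzg} (with $n$ shifted by one) for $\tan^{-1}(F_{2n}/F_{2n+1})$.

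To obtain the Lucas-number identities~\eqref{equ.r1jgvwv} and~\eqref{equ.e1vvx0a}, I would substitute into these the instances of~\eqref{equ.svdrzxs} with $k=2n+1$ and $k=2n$ respectively, namely $\tan^{-1}\phi^{4n+1}=2\tan^{-1}1-\tfrac{1}{2}\tan^{-1}(2/L_{4n+1})$ and $\tan^{-1}\phi^{4n-1}=2\tan^{-1}1-\tfrac{1}{2}\tan^{-1}(2/L_{4n-1})$. The numerical $\tan^{-1}1$ terms collapse immediately, giving the two stated results.

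For the summation identities~\eqref{equ.k1d613q} and~\eqref{equ.tbk36la}, I would instead substitute the telescoping formula~\eqref{equ.tjjg38v} with $n$ replaced by $2n$ and $2n-1$ respectively. In the first case the constants $\tan^{-1}1+\tfrac{1}{2}\tan^{-1}\tfrac{1}{2}$ cancel cleanly and~\eqref{equ.k1d613q} drops out. In the second case one is left with the constant $2\tan^{-1}1-\tan^{-1}\tfrac{1}{2}$, and the only nontrivial step of the whole proof is recognizing that this equals $\tan^{-1}2$, which follows from the complementary-angle identity $\tan^{-1}(1/2)+\tan^{-1}2 = \pi/2 = 2\tan^{-1}1$. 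This simplification then produces~\eqref{equ.tbk36la}. No genuine obstacle is expected; the verification is essentially an algebraic book-keeping exercise that combines the three prior theorems.
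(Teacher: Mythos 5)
Your proof is correct and takes essentially the same route as the paper's: the paper likewise obtains \eqref{equ.k1d613q} and \eqref{equ.tbk36la} by comparing \eqref{equ.g6b2dzg} and \eqref{equ.ric8gsn} with the telescoping formula \eqref{equ.tjjg38v}, and obtains \eqref{equ.r1jgvwv} and \eqref{equ.e1vvx0a} by comparing the same two identities with \eqref{equ.svdrzxs}. The only difference is that you make explicit the index shifts and the simplification $2\tan^{-1}1-\tan^{-1}\tfrac{1}{2}=\tan^{-1}2$, details the paper leaves implicit.
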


\begin{proof}
Comparing identities~\eqref{equ.g6b2dzg} and \eqref{equ.tjjg38v}, we obtain identity~\eqref{equ.k1d613q}, expressing, as a sum of reciprocal arctangents of even indexed Lucas numbers, the arctangent of the ratio of any two consecutive Fibonacci numbers, with the even indexed Fibonacci number as the numerator. Similarly, by comparing identities~\eqref{equ.ric8gsn} and \eqref{equ.tjjg38v}, we obtain identity~\eqref{equ.tbk36la}, expressing, as a sum of the arctangents of reciprocal even indexed Lucas numbers, the arctangent of the ratio of any two consecutive Fibonacci numbers, with the odd indexed Fibonacci number as the numerator.~\eqref{equ.r1jgvwv} follows from~\eqref{equ.svdrzxs} and~\eqref{equ.g6b2dzg} while~\eqref{equ.e1vvx0a} is obtained by comparing~\eqref{equ.svdrzxs} and~\eqref{equ.ric8gsn}.
\end{proof}

Taking limit $n\to\infty$ in~\eqref{equ.k1d613q}, we obtain

\begin{thm}

\begin{equation}\label{equ.golzqcc}
\tan^{-1}\frac{1}{\phi}=\sum_{k=1}^{\infty}\tan^{-1}\left( \frac{1}{L_{2k}}\right)\,.
\end{equation} 
\end{thm}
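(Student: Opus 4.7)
The plan is to obtain this identity by passing to the limit $n\to\infty$ in the finite identity~\eqref{equ.k1d613q}, namely
\[
\tan^{-1}\!\left(\frac{F_{2n}}{F_{2n+1}}\right)=\sum_{k=1}^{2n}\tan^{-1}\!\left(\frac{1}{L_{2k}}\right),
\]
which has already been established in the excerpt. So the task reduces to two routine limit checks.

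First I would handle the left side. Using property~\eqref{equ.n1xmwpl}, the ratio $F_{2n+1}/F_{2n}$ equals $\phi+F_{2n-1}/F_{2n}\cdot(\text{correction})$, or more directly one can appeal to the standard consequence of Binet's formula that $F_{n+1}/F_n\to\phi$; hence $F_{2n}/F_{2n+1}\to 1/\phi$. Since $\tan^{-1}$ is continuous on $\mathbb{R}$, the left side tends to $\tan^{-1}(1/\phi)$.

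Next I would verify convergence of the series on the right. From~\eqref{equ.wr334af} and the Binet-type formula $L_n=\phi^n+(-\phi)^{-n}$, one has $L_{2k}\ge\phi^{2k}-1$, so
\[
0<\tan^{-1}\!\left(\frac{1}{L_{2k}}\right)<\frac{1}{L_{2k}}=O(\phi^{-2k}),
\]
which is a geometric bound. Therefore the series $\sum_{k=1}^{\infty}\tan^{-1}(1/L_{2k})$ converges absolutely, and the partial sum up to $2n$ tends to the infinite sum as $n\to\infty$.

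Equating the two limits yields the claimed identity. The only potentially delicate point is confirming that the truncated sum $\sum_{k=1}^{2n}$ (rather than $\sum_{k=1}^{n}$) indeed captures every term as $n\to\infty$, which it clearly does because $2n\to\infty$; so there is really no obstacle, just a continuity argument combined with a dominated geometric tail estimate.
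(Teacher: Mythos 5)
Your proposal is correct and takes essentially the same route as the paper, which obtains \eqref{equ.golzqcc} precisely by letting $n\to\infty$ in \eqref{equ.k1d613q}. The limit $F_{2n}/F_{2n+1}\to 1/\phi$ and the geometric tail bound $\tan^{-1}(1/L_{2k})=O(\phi^{-2k})$ that you supply are exactly the (standard) justifications the paper leaves implicit.
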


\begin{rem}

It is instructive to compare~\eqref{equ.k1d613q} with the well-known result

\begin{equation}
\tan^{-1}\frac{1}{F_{2n}}=\sum_{k=n}^{\infty}\tan^{-1}\left( \frac{1}{F_{2k+1}}\right)\label{equ.cq625h1}\,,
\end{equation} 

and to compare \eqref{equ.golzqcc} with the case $n=1$ in~\eqref{equ.cq625h1}, namely,

\begin{equation}
\tan^{-1}1=\sum_{k=1}^{\infty}\tan^{-1}\left( \frac{1}{F_{2k+1}}\right)\label{equ.q0o0cvy}\,.
\end{equation}

The identity~\eqref{equ.golzqcc} was also proved in~\cite{hoggatt5} (Theorem 6).

\end{rem}

\section{BBP-type formulas}

The convergent series

\begin{equation}\label{equ.bzbrxzc}
C=\sum\limits_{k \ge 0}  {\frac{1}{{b^k }}\sum\limits_{j = 1}^l {\frac{{a_j }}{{(kl + j)^s }}} }\equiv P(s,b,l,A)\,,
\end{equation}

where $s$ and $l$ are integers, $b$ is a real number and \mbox{$A = (a_1, a_2,\ldots, a_l)$} is a vector of real numbers, defines a base-$b$ expansion of the polylogarithm constant $C$. If $b$ is an integer and $A$ is a vector of integers, then~\eqref{equ.bzbrxzc} is called a BBP type formula for the mathematical constant $C$. A BBP-type formula has the remarkable property that it allows the \mbox{$i$-th} digit of a mathematical constant to be computed without having to compute any of the previous $i-1$ digits and without requiring ultra high-precision~\cite{lord99, bailey09}. BBP-type formulas were first introduced in a 1996 paper~\cite{bbp97}, where a formula of this type for $\pi$ was given. The BBP-type formulas derived in this section will be given in the standard notation, defined by~\eqref{equ.bzbrxzc}.

\subsection{Binary BBP-type formulas for the arctangents of odd powers of the golden ratio\label{sec.binary}}

Identities~\eqref{equ.svdrzxs}, \eqref{equ.fsdxekk}, \eqref{equ.w7urgvy}, \eqref{equ.ric8gsn} and \eqref{equ.g6b2dzg} give binary BBP-type formulas for the odd powers of $\phi$, whenever binary BBP-type formulas exist for the rational numbers whose arctangents are involved. The first few BBP-type series ready identities are the following:
\begin{equation}\label{equ.k047rle}
\tan^{-1}\phi=\tan^{-1} 1+\frac{1}{2}\tan^{-1} \frac{1}{2}\quad \mbox{(identity~\eqref{equ.nhfkxe6})}\,,
\end{equation}

\begin{equation}\label{equ.e5k52tv}
\tan^{-1}\phi^3=2\tan^{-1} 1-\frac{1}{2}\tan^{-1} \frac{1}{2}\quad (\mbox{$n=1$ in~\eqref{equ.ric8gsn}})\,,
\end{equation}

\begin{equation}\label{equ.ix3ncas}
\tan^{-1}\phi^5=\tan^{-1} 1+\frac{3}{2}\tan^{-1} \frac{1}{2}\quad (\mbox{$n=2$ in~\eqref{equ.g6b2dzg}})\,,
\end{equation}

\begin{equation}\label{equ.dtctv2m}
\tan^{-1}\phi^7=3\tan^{-1} 1-\frac{3}{2}\tan^{-1} \frac{1}{2}-\tan^{-1} \frac{1}{8}\quad (\mbox{$n=2$ in~\eqref{equ.ric8gsn}})
\end{equation}

and

\begin{equation}\label{equ.o55b3gk}
\tan^{-1}\phi^9=2\tan^{-1} 1+\frac{1}{2}\tan^{-1} \frac{1}{2}-\tan^{-1} \frac{1}{4}\quad (\mbox{$n=3$ in~\eqref{equ.g6b2dzg}})\,.
\end{equation}

In obtaining the final form of~\eqref{equ.o55b3gk} we used

\[
\tan^{-1} \frac{3}{5}=\tan^{-1}1-\tan^{-1} \frac{1}{4}\,.
\]

To derive the BBP-type formulas that correspond to~\eqref{equ.k047rle}~---~\eqref{equ.o55b3gk}, we will employ the following BBP-type formulas in general bases, derived in reference~\cite{adegokejmr}:

\begin{equation}\label{equ.b0395u2}
\tan^{-1}\frac{1}{u}=\frac{1}{u^{3}}P(1,u^{4},4,(u^{2},0,-1,0))\,,
\end{equation}

\begin{equation}\label{equ.q7605l0}
\tan^{-1}\left (\frac{1}{2u-1}\right )=\frac{1}{16u^{7}}P(1,16u^{8},8,(8u^{6},8u^{5},4u^{4},0,-2u^2,-2u,-1,0))
\end{equation}

and

\begin{equation}\label{equ.dy195dg}
\tan^{-1}\left (\frac{1}{2u+1}\right )=\frac{1}{16u^{7}}P(1,16u^{8},8,(8u^{6},-8u^{5},4u^{4},0,-2u^2,2u,-1,0))\,.
\end{equation}

Using $u=2$ in~\eqref{equ.b0395u2} and $u=1$ in~\eqref{equ.q7605l0}, and forming the indicated linear combinations, identities~\eqref{equ.k047rle}~---~\eqref{equ.ix3ncas} give rise to the following BBP type formulas:

\begin{equation}
\tan^{-1}\phi=\frac{1}{16}P(1,16,8,(8,16,4,0,-2,-4,-1,0))\,,
\end{equation}

\begin{equation}
\tan^{-1}\phi^3=\frac{1}{8}P(1,16,8,(8,4,4,0,-2,-1,-1,0))
\end{equation}

and

\begin{equation}
\tan^{-1}\phi^5=\frac{1}{16}P(1,16,8,(8,32,4,0,-2,-8,-1,0))\,.
\end{equation}

Using $u=1$ in~\eqref{equ.q7605l0} and $u=2$ in~\eqref{equ.b0395u2} and expanding both series to base~$2^{12}$, length $24$, and using $u=8$ in~\eqref{equ.b0395u2} and finally forming the indicated linear combination gives the BBP type formula for $\tan^{-1}\phi^7$ as

\begin{equation}
\begin{split}
\tan^{-1}\phi^7 &=\frac{3}{4096}P(1,2^{12},24,(2048,0,1024,0,-512,-1024,-256,\\
&\qquad 0,128,0,64,0,-32,0,-16,0,8,16,4,0,-2,0,-1,0))\,.
\end{split}
\end{equation}

Using $u=1$ in~\eqref{equ.q7605l0}, $u=2$ in~\eqref{equ.b0395u2} and $u=4$ in~\eqref{equ.b0395u2}, expanding the three series to base~$256$, length $16$, and forming the indicated linear combination in~\eqref{equ.o55b3gk} gives the BBP type formula for $\tan^{-1}\phi^9$ as

\begin{equation}
\begin{split}
\tan^{-1}\phi^9 &=\frac{1}{128}P(1, 256, 16, (128, 192, 64, -128, -32, -48,\\
&\qquad -16, 0, 8, 12, 4, 8, -2, -3, -1, 0))\,.
\end{split}
\end{equation}

\subsection{ Base $5$ BBP-type formulas \label{sec.phinary}}

From the identities \eqref{equ.qmp0021}, \eqref{equ.ydokq3d} and \eqref{equ.eggvqz7} we can form the following BBP-type series ready combinations

\begin{equation}\label{equ.b9ws61o}
\begin{split}
\tan^{-1}\left(\frac{1}{\phi^2}\right)+\tan^{-1}\left(\frac{1}{\phi^4}\right)&=\tan^{-1}\left(\frac{\sqrt 5}{4}\right)\\
&=\tan^{-1}\left(\frac{1}{\sqrt 5}\right)+\tan^{-1}\left(\frac{1}{\sqrt 5^3}\right)\,,
\end{split}
\end{equation}

\begin{equation}
\tan^{-1}\left(\frac{1}{\phi^2}\right)+\tan^{-1}\left(\frac{1}{\phi^6}\right)=\tan^{-1}\left(\frac{1}{\sqrt 5}\right)
\end{equation} 

and

\begin{equation}\label{equ.rwaei1b}
\tan^{-1}\left(\frac{1}{\phi^4}\right)-\tan^{-1}\left(\frac{1}{\phi^6}\right)=\tan^{-1}\left(\frac{1}{\sqrt 5^3}\right)\,.
\end{equation} 

According to~\eqref{equ.b0395u2},

\begin{equation}
\begin{split}
&\sqrt 5\tan^{-1}\left(\frac{1}{\sqrt 5}\right)=\frac{1}{5}P(1, 25, 4, (5, 0, -1, 0))
\end{split}
\end{equation}

and

\begin{equation}
\begin{split}
&\sqrt 5\tan^{-1}\left(\frac{1}{\sqrt 5^3}\right)=\frac{1}{5^4}P(1, 5^6, 4, (5^3, 0, -1, 0))\,.
\end{split}
\end{equation}

Identities.~\eqref{equ.b9ws61o}~---~\eqref{equ.rwaei1b} therefore give rise to the following base~$5$ \mbox{BBP-type} formulas:
\begin{equation}
\begin{split}
&\sqrt 5\left\{\tan^{-1}\left(\frac{1}{\phi^2}\right)+\tan^{-1}\left(\frac{1}{\phi^4}\right)\right\}\\
&=\frac{1}{5^5}P(1, 5^6, 12, (5^5, 0, 2\cdot 5^4, 0, 5^3, 0, -5^2, 0, -10, 0, -1, 0))\,,
\end{split}
\end{equation} 

\begin{equation}
\begin{split}
&\sqrt 5\left\{\tan^{-1}\left(\frac{1}{\phi^2}\right)+\tan^{-1}\left(\frac{1}{\phi^6}\right)\right\}=\frac{1}{5}P(1, 25, 4, (5, 0, -1, 0))
\end{split}
\end{equation} 

and

\begin{equation}
\begin{split}
&\sqrt 5\left\{\tan^{-1}\left(\frac{1}{\phi^4}\right)-\tan^{-1}\left(\frac{1}{\phi^6}\right)\right\}=\frac{1}{5^4}P(1, 5^6, 4, (5^3, 0, -1, 0))\,.
\end{split}
\end{equation}

\subsection{ BBP-type formulas in base $\phi$\label{sec.phinary}}

Many BBP-type formulas in general bases were derived in reference~\cite{adegokejmr}. Base~$\phi$ formulas are easily obtained by choosing the base in any general formula of interest to be a power of $\phi$, and using the algebraic properties of $\phi$. We note that since $\phi$ is not an integer, these series are, technically speaking, not BBP-type, in the sense that they do not really lead to any digit extraction formulas, but rather correspond to base~$\phi$ expansions of the mathematical constants concerned. We now present some interesting degree~$1$ base~$\phi$ formulas. \mbox{$\phi-$nary} BBP-type formulas for $\pi$ were also derived in references~\cite{bailey01,chan,zhang}. The formulas presented here are considerably simpler and more elegant than those found in the earlier papers.

\bigskip

By setting \mbox{$n=\phi$} in identity~(27) of \cite{adegokejmr} we obtain a \mbox{$\phi-$nary} BBP-type formula for $\pi$: 

\begin{equation}\label{equ.i0d86zx}
\pi =\frac{4}{\phi^5} P(1, - \,\phi^6 ,6,(\,\phi^4 ,0,2\,\phi^2,0,1,0))\,.
\end{equation}

The base $\phi^{12}$, length $12$ version of~\eqref{equ.i0d86zx} is

\begin{equation}\label{equ.k3pgtka}
\pi = \frac{4}{\phi^{11}}P(1,\phi^{12},12,(\phi^{10}, 0, 2\phi^8, 0, \phi^6, 0, -\phi^4, 0, -2\phi^2, 0, -1, 0))\,.
\end{equation}

The following \mbox{$\phi-$nary} formulas are also readily obtained:
\[
\log \phi=  \frac{1}{\,\phi^2}P(1,\,\phi^2 ,2,(\,\phi,-1))\,,\quad(\mbox{$n=\phi$ in (28) of \cite{adegokejmr}})
\]

\[
\log 2  = \frac{1}{{\,\phi^3 }}P(1,\,\phi^3 ,3,(\,\phi^2 ,\,\phi, - 2))\,,\quad(\mbox{$n=\phi$ in (33) of \cite{adegokejmr}})
\]

\[
\tan^{-1} \left(\frac{1}{{\phi }}\right) = \frac{1}{\phi^3}P(1,\phi^4 ,4,(\phi^2,0, - 1,0))\,,\quad(\mbox{$u=\phi$ in (8) of \cite{adegokejmr}})
\]

\[
\sqrt 3\, \tan^{-1}\left (\sqrt{\frac{3}{5}}\;\right) = \frac{{ 3 }}{2\phi^5}P(1,\phi^{6} ,6,(\phi^{4} ,\phi^{3} ,0, - \phi , - 1 ,0))\,,\quad(\mbox{$n=\phi$ in (12) of \cite{adegokejmr}})
\]
\[
\sqrt 3 \tan^{-1}\left( {\frac{{\sqrt 3 }}{{\phi^3}}} \right) = \frac{{ 3 }}{2\phi^2}P(1, \phi^3 ,3,(\phi,-1,0))\,,\quad(\mbox{$n=\phi$ in (13) of \cite{adegokejmr}})
\]
\[
\begin{split}
\tan^{-1} \left( {\frac{1}{\sqrt 5}} \right) = \frac{1}{{16\phi^7}}P(1,16\,\phi^8 ,&8,(8\,\phi^6 ,8\,\phi^5 ,4\,\phi^4 ,0, - 2\,\phi^2 , - 2\,\phi, - 1,0))\,,\\
&(\mbox{$n=\phi$ in (17) of \cite{adegokejmr}})
\end{split}
\]
\[
\begin{split}
\tan^{-1} \left( {\frac{1}{\phi^3}} \right) =\frac{1}{{16\phi^7}}P(1,16\,\phi^8 ,8,(8\,\phi^6 , - 8\,\phi^5 ,&4\,\phi^4 ,0, - 2\,\phi^2 ,2\,\phi, - 1,0))\,,\\
&(\mbox{$n=\phi$ in (18) of \cite{adegokejmr}})
\end{split}
\]

\[
\sqrt 2 \tan^{-1} \sqrt 2 = \frac{2}{\phi^7} P(1,\,\phi^8 ,8,(\,\phi^6 ,0,\,\phi^4 ,0, - \,\phi^2,0, - 1,0))\,,(\mbox{n=$\phi^2$ in (21) of \cite{adegokejmr}})
\]

\[
\begin{split}
27\sqrt 3\,\tan^{-1} \left( \frac{1}{\sqrt {15}} \right) = \frac{{ 3 }}{2\phi^5 }P(1, - 27\,&\phi^6 ,6,(9\,\phi^4 ,9\,\phi^3 ,6\,\phi^2 ,3\,\phi,1,0))\\
&(\mbox{$n=\phi$ in (25) of \cite{adegokejmr}})
\end{split}
\]

and

\[
\begin{split}
27\sqrt 3\, \tan^{-1} \left( \frac{1}{\phi^3\sqrt 3} \right) =\frac{{3 }}{2\phi^5}P(1, - 27\,&\phi^6 ,6,(9\,\phi^4 ,-9\,\phi^3 ,6\,\phi^2 ,-3\,\phi,1,0))\\
&(\mbox{$n=\phi$ in (26) of \cite{adegokejmr}})\,.
\end{split}
\]

\section{Conclusion}

We have derived and presented interesting arctangent identities connecting the golden ratio, Fibonacci numbers and the Lucas numbers. Binary BBP-type formulas for the arctangents of the odd powers of the golden ratio and base~$5$ formulas for combinations of the arctangents of the reciprocal even powers of the golden ratio were derived. We also presented results for the \mbox{$\phi-$nary} expansion of some mathematical constants.

\section{Acknowledgement}

The author thanks the anonymous reviewer for a detailed review, and especially for his observation on the base~$\phi$ formulas.


\begin{thebibliography}{99}

\bibitem{hoggatt5}
V.~E.~Hoggatt,  Jr and I.~D.~Ruggles.
\newblock A Primer  for  the Fibonacci Numbers: Part V.
\newblock {\em The Fibonacci Quarterly}, 2(1):46--51, 1964.

\bibitem{shannon}
R.~S.~Melham and A.~G.~Shannon.
\newblock Inverse trigonometric and hyperbolic  summation formulas involving generalized Fibonacci numbers .
\newblock {\em The Fibonacci Quarterly}, 33(1):32--40, 1995.

\bibitem{mahon}
B.~J.~M.~Mahon and A.~F.~Horadam.
\newblock Inverse Trigonometrical Summation Formulas Involving Pell Polynomials.
\newblock {\em The Fibonacci Quarterly}, 23(4):319--324, 1985.

\bibitem{bailey01}
D.~H. Bailey and R.~E.~Crandall.
\newblock On the random character of fundamental constant expansions.
\newblock {\em Experimental Mathematics}, 10:175, 2001.

\bibitem{borwein}
J.~Borwein and M.~Chamberland.
\newblock A Golden Example Solved.
\newblock {\em Undated manuscript}.

\bibitem{chan}
H.~Chan.
\newblock $\pi$ in terms of $\phi$.
\newblock {\em The Fibonacci Quarterly}, 44(2):141--144, 2006.

\bibitem{zhang}
W.~Zhang.
\newblock New formulae of BBP-type with different moduli.
\newblock {\em Journal of Mathematical Analysis and Applications}, 398:46--60, 2013. doi:10.1016/j.jmaa.2012.08.007

\bibitem{lord99}
N.~Lord, 
\newblock Recent formulae for pi: Arctan revisited! 
\newblock{\em The Mathematical Gazette} 83:479--483,1999.

\bibitem{bailey09}
D.~H.~Bailey.
\newblock A compendium of bbp-type formulas for mathematical constants\\
  \url{http://crd.lbl.gov/~dhbailey/dhbpapers/bbp-formulas.pdf}.
\newblock February 2011.

\bibitem{bbp97}
D.~H. Bailey, P.~B.~Borwein, and S.~Plouffe.
\newblock On the rapid computation of various polylogarithmic constants.
\newblock {\em Mathematics of Computation}, 66(218):903--913, 1997.

\bibitem{adegokejmr}
K.~Adegoke.
\newblock A non-PSLQ route to BBP-type formulas.
\newblock {\em Journal of Mathematics Research}, 2(2):56--64, 2010.







\end{thebibliography}
\end{document}